\newtheorem{Theorem}{Theorem}
\newtheorem{Lemma}[Theorem]{Lemma}
\newtheorem{Corollary}[Theorem]{Corollary}
\theoremstyle{definition}
\newtheorem{Example}[Theorem]{Example}
\theoremstyle{remark}
\newtheorem{Remark}[Theorem]{Remark}
\def\({{\rm (}}
\def\){{\rm )}}
\let\Mathrm\operator@font
\newcommand{\fp}{\ensuremath{\mathfrak p}}
\def\standop#1{\mathop{\Mathrm #1}\nolimits}
\def\difstop#1#2{\expandafter\def\csname #1\endcsname{\standop{#2}}}
\def\defstop#1{\difstop{#1}{#1}}
\def\op{^{\standop{op}}}
\newcommand{\bs}[1]{\boldsymbol{#1}}
\def\specialarrow#1{\setbox\z@=\hbox{$\m@th
 \mathop{\vphantom{\rightarrow}}\limits^{\hspace{.5ex}{#1}\hspace
{.8ex}}$}\mathrel{\ifdim\wd\z@<1.2em\dimen\tw@
1.2em\else\dimen\tw@\wd\z@\fi\copy\z@\kern-\wd\z@\hbox to\dimen\tw@
{\rightarrowfill}}}
\def\sdarrow#1{\downarrow\hbox to 0pt{\scriptsize$#1$\hss}}
\def\suarrow#1{\uparrow\hbox to 0pt{\scriptsize$#1$\hss}}
\def\ssearrow#1{\searrow\hbox to 0pt{\scriptsize$#1$\hss}}
\let\indlim\varinjlim
\def\section{\@startsection{section}{1}{\z@ }%
{-3.5ex plus -1ex minus -.2ex}{2.3ex plus .2ex}{\bf }}
\long\def\refname{\par\kern -3ex
\begin{center}\rm R\sc{eferences}\end{center}\par\kern 
-2ex}
\def\@seccntformat#1{\csname the#1\endcsname.\quad}
\def\@@@sect#1#2#3#4#5#6[#7]#8{%
   \ifnum #2>\c@secnumdepth 
      \def \@svsec {}\else \refstepcounter {#1}%
      \def\@svsec{}
   \fi 
   \@tempskipa #5\relax 
   \ifdim \@tempskipa >\z@ 
     \begingroup #6\relax \@hangfrom {\hskip #3\relax 
     \@svsec}{\interlinepenalty \@M #8\par }\endgroup 
     \csname #1mark\endcsname {#7}
   \else 
   \def \@svsechd {#6\hskip #3\@svsec #8\csname #1mark\endcsname {#7}}
   \fi \@xsect {#5}}
\def\@@@startsection#1#2#3#4#5#6{%
 \if@noskipsec \leavevmode \fi \par \@tempskipa #4\relax \@afterindenttrue 
 \ifdim \@tempskipa <\z@ \@tempskipa -\@tempskipa \@afterindentfalse 
 \fi \if@nobreak \everypar {}\else \addpenalty {\@secpenalty }\addvspace 
  {\@tempskipa }\fi \@ifstar {\@ssect {#3}{#4}{#5}{#6}}{\@dblarg 
  {\@@@sect {#1}{#2}{#3}{#4}{#5}{#6}}}}
\def\theparagraph{\thesection.\arabic{paragraph}}
\def\aparagraph{\@@@startsection{paragraph}{2}{\z@ }%
              {1.75ex plus .2ex minus .15ex}{-1em}{\bf(\theparagraph) } }
\def\paragraph{\@@@startsection{paragraph}{2}{\z@ }%
              {1.75ex plus .2ex minus .15ex}{-1em}{}{\bf(\theparagraph)} }
\let\c@Theorem\c@paragraph
\title{Acyclicity of complexes of flat modules}
\author{M{\sc itsuyasu} H{\sc ashimoto}}
\date{\normalsize
Graduate School of Mathematics, Nagoya University\\
Chikusa-ku,  Nagoya 464--8602 JAPAN\\
{\small \tt hasimoto@math.nagoya-u.ac.jp}\\
~\\
Dedicated to Professor Masayoshi Nagata on his eightieth birthday
}
\begin{document}
\maketitle
\footnote[0]{2000 \textit{Mathematics Subject Classification}. 
    Primary 13C11; Secondary 13C10.}%
\begin{abstract}
Let $R$ be a noetherian commutative ring, and 
\[
\Bbb F: \cdots \rightarrow F_2\rightarrow F_1\rightarrow F_0\rightarrow 0
\]
a complex of flat $R$-modules.
We prove that
if $\kappa(\frak p)\otimes_R\Bbb F$ is acyclic for every $\frak p\in\Spec R$,
then $\Bbb F$ is acyclic, and $H_0(\Bbb F)$ is $R$-flat.
It follows that if 
$\Bbb F$ is a (possibly unbounded)
complex of flat $R$-modules and
$\kappa(\frak p)\otimes_R \Bbb F$ is exact for every $\frak p\in\Spec R$,
then $\Bbb G\otimes_R^\bullet\Bbb F$ is exact for every $R$-complex $\Bbb G$.
If, moreover, $\Bbb F$ is a complex of projective $R$-modules,
then it is null-homotopic (follows from Neeman's theorem).
\end{abstract}

\section{Introduction}

Throughout this paper, $R$ denotes a noetherian commutative ring.
The symbol $\otimes$ without any subscript means $\otimes_R$.
For $\frak p\in\Spec R$, let $-(\frak p)$ denote the functor 
$\kappa(\frak p)\otimes-$, where $\kappa(\frak p)$ is the field
$R_{\frak p}/\frak p R_{\frak p}$.
An $R$-complex of the form 
\[
\Bbb F: \cdots \xrightarrow{d_2} F_1\xrightarrow{d_1}F_0\rightarrow 0
\]
is said to be {\em acyclic} if $H_i(\Bbb F)=0$ for every $i>0$.

In this paper, we prove:
\begin{Theorem}\label{main.thm}
Let 
\[
\Bbb F: \cdots \xrightarrow{d_2} F_1\xrightarrow{d_1}F_0\rightarrow 0
\]
be a complex of $R$-flat modules.
If $\Bbb F(\frak p)$ is acyclic for every $\frak p\in\Spec R$, then
$\Bbb F$ is acyclic, and $H_0(\Bbb F)$ is $R$-flat.
In particular, $M\otimes\Bbb F$ is acyclic for every $R$-module $M$.
\end{Theorem}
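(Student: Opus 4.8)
The plan is to argue by noetherian induction on $R$: to prove the statement for $R$ we may assume it already holds for every quotient $R/\frak a$ with $\frak a\ne 0$ (fields and the zero ring being trivial base cases). Everything rests on one elementary criterion for flatness that carries no finiteness hypothesis: an $R$-module $M$ is flat whenever $\Tor_1^R(M,R/\frak p)=0$ for every prime $\frak p$ of $R$ --- indeed it then suffices to kill $\Tor_1^R(M,T)$ for finitely generated $T$, and over a noetherian ring such $T$ admit finite filtrations whose subquotients have the form $R/\frak p$. Given $\Bbb F$ as in the theorem, set $C_n:=\Coker(d_{n+1}\colon F_{n+1}\to F_n)$; I would first show that every $C_n$ is $R$-flat and then deduce acyclicity.

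The key two-term lemma is: a homomorphism $f\colon F\to G$ of flat $R$-modules with $f(\frak p)$ injective for every $\frak p\in\Spec R$ is injective with flat cokernel. For the cokernel, by the criterion above it suffices that $\Image f\otimes R/\frak p\to G\otimes R/\frak p$ be injective; now $F\otimes R/\frak p$ is flat, hence torsion-free, over the domain $R/\frak p$, and $f(\frak p)$ is the localization of $f\otimes R/\frak p$ at the nonzero elements of $R/\frak p$, so injectivity of $f(\frak p)$ already forces $f\otimes R/\frak p$ to be injective, and the claim follows since $F\otimes R/\frak p\twoheadrightarrow\Image f\otimes R/\frak p$. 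With $\Coker f$ flat, $\Image f$ is flat, and a similar argument gives $\Ker f\otimes R/\frak p=0$ for every $\frak p$; since a module $L$ with $L\otimes R/\frak p=0$ for all minimal primes $\frak p$ satisfies $L=\frak p_1\cdots\frak p_r L\subseteq\mathrm{nil}(R)L$ and hence $L=\mathrm{nil}(R)^N L=0$, we conclude $\Ker f=0$.

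To see that $C_n$ is flat, a short diagram chase using $0\to B_n\to F_n\to C_n\to 0$ and $Z_{n+1}\to F_{n+1}\to B_n\to 0$ (where $B_n=\Image d_{n+1}$ and $Z_{n+1}=\Ker d_{n+1}$) identifies $\Tor_1^R(C_n,R/\frak p)$ with $\Ker(d_{n+1}\otimes R/\frak p)/\Image(Z_{n+1}\otimes R/\frak p\to F_{n+1}\otimes R/\frak p)$; as $\Image d_{n+2}\subseteq Z_{n+1}$, this is a quotient of $\Ker(d_{n+1}\otimes R/\frak p)/\Image(d_{n+2}\otimes R/\frak p)=H_{n+1}(R/\frak p\otimes\Bbb F)$. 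Now $R/\frak p\otimes\Bbb F$ is again a complex of flat modules over the noetherian ring $R/\frak p$ whose fiber over a prime $\frak q\supseteq\frak p$ is $\Bbb F(\frak q)$, hence acyclic; so for $\frak p\ne 0$ the inductive hypothesis makes $R/\frak p\otimes\Bbb F$ acyclic and $H_{n+1}$ vanish, while for $\frak p=0$ (relevant only when $R$ is a domain) $\Tor_1^R(C_n,R)=0$ trivially. Thus $\Tor_1^R(C_n,R/\frak p)=0$ for all $\frak p$, so $C_n$ is flat for every $n$.

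Finally, $d_{n+1}$ induces $\bar d_{n+1}\colon C_{n+1}\to F_n$ with $\Ker\bar d_{n+1}=Z_{n+1}/\Image d_{n+2}=H_{n+1}(\Bbb F)$; both $C_{n+1}$ and $F_n$ are now flat, and the kernel of $\bar d_{n+1}(\frak p)$ is $H_{n+1}(\Bbb F(\frak p))=0$ for every $\frak p$, so the two-term lemma forces $\bar d_{n+1}$ injective, i.e.\ $H_{n+1}(\Bbb F)=0$; and $H_0(\Bbb F)=C_0$ is flat. Since $\Bbb F$ is then a flat resolution of the flat module $H_0(\Bbb F)$, we get $H_i(M\otimes\Bbb F)=\Tor_i^R(M,H_0(\Bbb F))=0$ for all $i>0$ and all $M$. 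The real obstacle is the single point where finiteness disappears --- deducing that $\Coker f$ is flat from mere fiberwise injectivity of $f$, where the usual local criterion of flatness is unavailable; the way around it is to test against the domains $R/\frak p$ rather than the residue fields $\kappa(\frak p)$, exploit torsion-freeness of $F\otimes R/\frak p$ over $R/\frak p$, and feed this into the finiteness-free $\Tor_1(-,R/\frak p)$ criterion for flatness.
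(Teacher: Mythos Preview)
Your proof is correct, but it follows a genuinely different route from the paper's.

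The paper argues by taking a maximal ideal $I$ for which $R/I\otimes\Bbb F$ fails to be acyclic and replacing $R$ by $R/I$; a filtration argument reduces to the case where $R$ is a domain, and then for each nonzero $x\in R$ the exact sequence $0\to\Bbb F\xrightarrow{x}\Bbb F\to R/Rx\otimes\Bbb F\to 0$ together with maximality of $I$ shows that $x$ acts invertibly on $H_i(\Bbb F)$ for $i>0$, whence $H_i(\Bbb F)\cong H_i(K\otimes\Bbb F)=H_i(\Bbb F(0))=0$. In particular, the paper \emph{deduces} the two-term lemma (its Corollary~\ref{map.thm}) as a special case of the main theorem, noting that this unified argument is simpler than the earlier direct proofs of the two-term case in \cite{Enochs} and \cite{Hashimoto}.

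Your approach reverses this logic: you first give a self-contained proof of the two-term lemma --- testing flatness of $\Coker f$ against $R/\frak p$ and exploiting torsion-freeness of $F\otimes R/\frak p$ over the domain $R/\frak p$ to pass from injectivity of $f(\frak p)$ to injectivity of $f\otimes R/\frak p$ --- and then bootstrap to the full theorem by combining it with the inductive hypothesis to show every $C_n$ is flat, finally applying the lemma to $\bar d_{n+1}\colon C_{n+1}\to F_n$. What your approach buys is an explicit, structural picture (all the cokernels $C_n$ are flat), and it makes the two-term case stand on its own; what the paper's approach buys is brevity and the avoidance of any separate treatment of the length-two case. Both use noetherian induction in essentially the same way, but the paper's key device --- showing $H_i(\Bbb F)$ is already a $K$-vector space --- lets it bypass the intermediate flatness-of-cokernels step entirely.
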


It has been known 
that, for an $R$-linear map of 
$R$-flat modules $\varphi:F_1\rightarrow F_0$, if $\varphi(\frak p)$
is injective for every  
$\frak p\in\Spec R$, then $\varphi$ is injective and $\Coker
\varphi$ is $R$-flat (see \cite[Lemma~4.2]{Enochs}, 
\cite[Lemma~I.2.1.4]{Hashimoto} and 
Corollary~\ref{map.thm}).
This is the special case of 
the theorem where $F_i=0$ for every $i\geq 2$.
The new proof of the theorem is simpler than the proofs of the special case
in \cite{Enochs} and \cite{Hashimoto}.

By the theorem, it follows immediately that if $\Bbb F$ is an (unbounded)
complex of $R$-flat modules 
and $\Bbb F(\frak p)$ is exact for every $\frak p\in\Spec R$, then
$\Bbb F$ is {\em $K$-flat} (to be defined below) and exact.
Combining this and Neeman's result, we can also prove that
an (unbounded) complex $\Bbb P$ of $R$-projective modules 
is null-homotopic if 
$\Bbb P(\frak p)$ is exact for every $\frak p\in\Spec R$.

The author is grateful to H. Brenner for a valuable discussion.
Special thanks are also due to A. Neeman for sending his
preprint \cite{Neeman} to the author.
The author thanks the referee for valuable comments.

\section{Main results}

We give a proof of Theorem~\ref{main.thm}.

\begin{proof}[Proof of Theorem~\ref{main.thm}] 
It suffices to prove that $R/I\otimes \Bbb F$ is acyclic for every
ideal $I$ of $R$.
Indeed, if so, then considering the case that $I=0$, we have that $\Bbb F$
is acyclic so that it is a flat resolution of $H_0(\Bbb F)$.
Since $R/I\otimes \Bbb F$ is acyclic for every ideal $I$, we have that
$\Tor_i^R(R/I,H_0(\Bbb F))=0$ for every $i>0$.
Thus $H_0(\Bbb F)$ is $R$-flat.
So $\Tor_i^R(M,H_0(\Bbb F))=0$ for every $i>0$, and the last assertion 
of the theorem follows.

Assume the contrary, and let $I$ be maximal among the ideals $J$ such that
$R/J\otimes \Bbb F$ is not acyclic.
Then replacing $R$ by $R/I$ and $\Bbb F$ by $R/I\otimes \Bbb F$, we may
assume that $R/I\otimes\Bbb F$ is acyclic for every nonzero ideal $I$ of
$R$, but $\Bbb F$ itself is not acyclic.

Assume that $R$ is not a domain.
There exists a filtration
\[
0=M_0\subset M_1\subset\cdots\subset M_r=R
\]
such that for each $i$, $M_i/M_{i-1}\cong R/\fp_i$ for some $\fp_i\in
\Spec R$.
Since each $\fp_i$ is a nonzero ideal, $R/\fp_i\otimes\Bbb F$ is acyclic.
So $M_i\otimes\Bbb F$ is acyclic for every $i$.
In particular, $\Bbb F\cong M_r\otimes\Bbb F$ is acyclic, and this is
a contradiction.
So $R$ must be a domain.

For each $x\in R\setminus0$, there is an exact sequence
\[
0\rightarrow \Bbb F\xrightarrow x \Bbb F\rightarrow R/Rx\otimes \Bbb F
\rightarrow 0.
\]
Since $R/Rx\otimes\Bbb F$ is acyclic, we have that
$x: H_i(\Bbb F)\rightarrow H_i(\Bbb F)$ is an isomorphism for every $i>0$.
In particular, $H_i(\Bbb F)$ is a $K$-vector space, where $K=\kappa(0)$ is the
field of fractions of $R$.
So 
\[
H_i(\Bbb F)\cong K\otimes H_i(\Bbb F)\cong H_i(K\otimes\Bbb F)
=H_i(\Bbb F(0))=0\qquad (i>0),
\]
and this is a contradiction.
\end{proof}

Let $A$ be a ring.
A complex $\Bbb F$ of left $A$-modules 
is said to be {\em $K$-flat} if the tensor product
$\Bbb G\otimes_A^\bullet \Bbb F$ is exact for every exact complex $\Bbb G$ of 
right $A$-modules, see \cite[Definition~5.1]{Spaltenstein}.

For a chain complex
\[
\Bbb H:\cdots \rightarrow H_{i+1}\xrightarrow{d_{i+1}}
H_i\xrightarrow{d_i}H_{i-1}\rightarrow\cdots
\]
of left or right $A$-modules, we denote the complex
\[
\cdots \rightarrow H_{i+1}\rightarrow \Ker d_i\rightarrow 0
\]
by $\tau_{\geq i}\Bbb H$ or $\tau^{\leq -i}\Bbb H$.
Since $\Bbb G\cong \indlim \tau^{\leq n}\Bbb G$, $\Bbb F$ is $K$-flat
if and only if $\Bbb G\otimes_A^\bullet \Bbb F$ is exact for every
exact complex $\Bbb G$ of right $A$-modules 
bounded above (i.e., $\Bbb G_{-i}=\Bbb G^{i}=0$ for $i\gg0$).
A complex $\Bbb F$ of flat left $A$-modules 
is $K$-flat if it is bounded above,
as can be seen easily from the spectral sequence argument.
A null-homotopic complex $\Bbb F$ is $K$-flat, since 
$\Bbb G\otimes_A^\bullet \Bbb F$ is null-homotopic for every complex $\Bbb G$.

\begin{Lemma}%
\label{K-flat.thm}
Let $A$ be a ring, and
\[
\Bbb F:\cdots\rightarrow F_{i+1}\xrightarrow{d_{i+1}} F_i
\xrightarrow{d_i}F_{i-1}\rightarrow\cdots
\]
a complex of flat left $A$-modules.
Then the following are equivalent.
\begin{description}
\item[\rm(1)] $M\otimes_A\Bbb F$ is exact for every right $A$-module $M$.
\item[\rm(2)] $\Bbb F$ is exact, and $\Image d_i$ is flat for every $i$.
\item[\rm(3)] For every complex $\Bbb G$ of right $A$-modules, 
$\Bbb G\otimes_A^\bullet \Bbb F$ is exact.
\item[\rm(4)] $\Bbb F$ is $K$-flat and exact.
\end{description}
\end{Lemma}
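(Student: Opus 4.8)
The plan is to prove the four conditions equivalent via the cycle $(1)\Rightarrow(2)\Rightarrow(3)\Rightarrow(4)\Rightarrow(1)$, reducing the unbounded statements to Theorem~\ref{main.thm} by a truncation argument. First I would observe that the implication $(4)\Rightarrow(1)$ is immediate: if $\Bbb F$ is $K$-flat, then for any right $A$-module $M$, viewed as a complex concentrated in degree $0$ and hence exact on the left and right, the complex $M\otimes_A^\bullet\Bbb F$ agrees with $M\otimes_A\Bbb F$ up to shift, so it is exact. (Strictly speaking $M$ as a degree-$0$ complex is not an exact complex, so instead I would take $\Bbb G$ to be a two-term free presentation $0\to Q\to P\to 0$ with $\Coker=M$, which is exact, and read off exactness of $M\otimes_A\Bbb F$ from the long exact sequence together with exactness of $P\otimes_A\Bbb F$ and $Q\otimes_A\Bbb F$; or more simply note $(3)\Rightarrow(1)$ trivially and $(4)\Rightarrow(3)$ is the definition of $K$-flat applied to exact $\Bbb G$ — wait, $(3)$ demands \emph{every} $\Bbb G$, not just exact ones.) Let me instead organize it as: $(3)\Rightarrow(4)$ is trivial since $K$-flatness only asks the conclusion of $(3)$ for exact $\Bbb G$, and exactness of $\Bbb F$ follows from $(3)$ by taking $\Bbb G=A$; $(4)\Rightarrow(1)$ as just sketched; so the real content is $(1)\Rightarrow(2)$ and $(2)\Rightarrow(3)$.

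For $(1)\Rightarrow(2)$: taking $M=A$ gives that $\Bbb F$ is exact. For flatness of $Z_i:=\Image d_i$, I would fix $i$ and consider the brutally truncated subcomplex $\Bbb F_{\geq i+1}:\cdots\to F_{i+2}\to F_{i+1}\to 0$ sitting in degrees $\geq i+1$; since $\Bbb F$ is exact, this is a flat resolution of $Z_{i+1}=\Ker d_i\subseteq F_i$, reindexed. But I want to handle $Z_i=\Image d_i$, so instead consider $\Bbb F':\cdots\to F_{i+1}\to F_i\to 0$, a complex of flat modules in degrees $\geq i$; its homology in positive degrees is $H_{j}(\Bbb F)=0$ for $j>i$, and $H_i(\Bbb F')=F_i/\Ker d_{i-1}\cdots$ — actually its $0$th homology (degree $i$) is $F_i/\Image d_{i+1}=F_i/\Ker d_i\cong\Image d_i=Z_i$. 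So $\Bbb F'$ is a flat resolution of $Z_i$ (in the acyclicity sense of the paper, after reindexing to put $F_i$ in degree $0$), and $M\otimes_A\Bbb F'$ is a brutal truncation of $M\otimes_A\Bbb F$, hence acyclic in positive degrees by $(1)$. Therefore $\Tor_j^A(M,Z_i)=H_{?}(M\otimes_A\Bbb F')=0$ for all $j>0$ and all $M$, so $Z_i$ is flat. The only subtlety is bookkeeping with the indices and noting that brutal truncation of an exact complex of flats from above yields a flat resolution; this is routine.

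For $(2)\Rightarrow(3)$: assume $\Bbb F$ is exact with all $Z_i=\Image d_i$ flat, and let $\Bbb G$ be an arbitrary complex of right $A$-modules. Split the exact $\Bbb F$ into short exact sequences $0\to Z_{i+1}\to F_{i+1}\to Z_i\to 0$ with all three terms flat. I would run the standard hyper-Tor spectral sequence (or argue directly by a double-complex / mapping-cone induction). Concretely, writing $\Bbb F$ as built up from the short exact sequences, one shows by dimension shifting that $H_n(\Bbb G\otimes_A^\bullet\Bbb F)$ computes the hyperhomology $\mathbb{L}(\Bbb G\otimes_A-)(\Bbb F)$, and since each $Z_i$ is flat, $\Bbb F$ is quasi-isomorphic in the derived category to nothing in positive/negative degrees — more precisely, exactness of $\Bbb F$ plus flatness of the cycles means $\Bbb F$ is acyclic \emph{and} $K$-flat, so $\Bbb G\otimes_A^\bullet\Bbb F\simeq\Bbb G\otimes_A^{\mathbf L}\Bbb F\simeq\Bbb G\otimes_A^{\mathbf L}0=0$. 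To avoid invoking derived categories, I would instead argue elementarily: for $\Bbb G$ bounded above, use the spectral sequence of the double complex $\Bbb G\otimes_A\Bbb F$, whose $E_1$-page computed by first taking homology in the $\Bbb F$-direction has columns $\Bbb G_p\otimes_A Z_i$ in the one surviving spot and vanishing elsewhere because $\Bbb F$ is a "flat resolution" with flat syzygies — this forces the abutment to be built from $\Tor_*^A(\Bbb G_p, Z_i)$ which, by flatness of $Z_i$, collapses, and a short diagram chase gives exactness of the total complex; then pass to general $\Bbb G$ via $\Bbb G\cong\indlim\tau^{\leq n}\Bbb G$ and exactness of filtered colimits, as the excerpt already notes for reducing $K$-flatness to bounded-above test complexes.

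The main obstacle I anticipate is $(2)\Rightarrow(3)$ for \emph{unbounded} $\Bbb F$ and \emph{unbounded} $\Bbb G$ simultaneously: the naive spectral sequence of a double complex need not converge when both directions are infinite, and $\Bbb F$ here is only bounded on neither side in principle (though in the ambient theorem $\Bbb F$ is $\cdots\to F_1\to F_0\to 0$, in the lemma it is genuinely unbounded). The clean fix is exactly the one flagged in the excerpt: flatness of every syzygy $Z_i$ means each short exact sequence $0\to Z_{i+1}\to F_{i+1}\to Z_i\to 0$ stays exact after $\Bbb G_p\otimes_A-$, so $\Bbb G\otimes_A\Bbb F$ is a double complex all of whose rows (in the $\Bbb F$-direction, for fixed $\Bbb G$-degree) are exact; then the total complex of a double complex with exact rows is exact provided the double complex lives in a region where the standard convergence/assembly argument applies, and for our shape (rows exact, and in each total degree only finitely many entries contribute \emph{after} using that $\Bbb G$ can be taken bounded above) this goes through. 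I would therefore structure the proof of $(2)\Rightarrow(3)$ as: (i) reduce to $\Bbb G$ bounded above by the colimit argument; (ii) observe the rows of $\Bbb G\otimes_A\Bbb F$ are exact by flatness of the $Z_i$; (iii) conclude exactness of the total complex by the acyclic-assembly lemma for double complexes bounded in one direction.
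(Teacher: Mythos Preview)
Your arguments for (1)$\Rightarrow$(2) and (3)$\Rightarrow$(4) match the paper's. For (2)$\Rightarrow$(3) the paper additionally truncates $\Bbb F$ (observing that $\tau^{\leq n}\Bbb F$ still satisfies (2), so that both $\Bbb G$ and $\Bbb F$ may be taken bounded above before the spectral sequence is invoked); your row-exactness approach is a legitimate alternative, though your stated justification is off: bounding $\Bbb G$ above does \emph{not} make each total degree a finite sum when $\Bbb G$ is unbounded below. What actually makes the acyclic assembly go through for the direct-sum total complex is that every element already has finite support, so the staircase built from row-exactness terminates regardless.

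The genuine gap is (4)$\Rightarrow$(1). Your proposed test complex $\Bbb G=[Q\to P]$ with $Q,P$ free and $\Coker=M$ is \emph{not} exact: its $H_0$ is $M$. Hence $K$-flatness of $\Bbb F$ says nothing about $\Bbb G\otimes_A^\bullet\Bbb F$. If instead you try the short exact sequence $0\to K\to P\to M\to 0$ with $P$ free and $K=\Ker(P\to M)$, then $P\otimes_A\Bbb F$ is indeed exact ($P$ free, $\Bbb F$ exact), but $K$ is in general neither free nor flat, so $K\otimes_A\Bbb F$ need not be exact; the long exact sequence only gives $H_n(M\otimes_A\Bbb F)\cong H_{n-1}(K\otimes_A\Bbb F)$, with no induction to terminate the dimension shift.

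The paper's remedy is to pass to a \emph{full} projective resolution $\Bbb P$ of $M$. Then $\Bbb P$ is a bounded-above complex of flat right $A$-modules, hence itself $K$-flat, so $\Bbb P\otimes_A^\bullet\Bbb F$ is exact because $\Bbb F$ is exact. The mapping cone $\Bbb Q$ of $\Bbb P\to M$ is exact, so $\Bbb Q\otimes_A^\bullet\Bbb F$ is exact by $K$-flatness of $\Bbb F$. The long exact sequence for the triangle $\Bbb P\to M\to\Bbb Q$ then forces $M\otimes_A\Bbb F$ to be exact. Note that both hypotheses in (4) are used, one for each of the two vanishing inputs.

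One side remark: the lemma is stated over an arbitrary ring $A$, so your opening plan to reduce to Theorem~\ref{main.thm} (which is about noetherian commutative $R$) is misplaced; happily your actual arguments for (1)$\Rightarrow$(2) and (2)$\Rightarrow$(3) do not invoke it.
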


\begin{proof} (1) $\Rightarrow$ (2).
Obviously, $\Bbb F\cong A\otimes_A \Bbb F$ is exact.
Thus
\[
\Bbb F':\cdots\rightarrow F_{i+1}\rightarrow F_{i}\rightarrow 0
\]
is a flat resolution of $\Image d_i$, where $F_{n+i}$ has the 
homological degree $n$ in $\Bbb F'$.
For every $i\in\Bbb Z$, 
\[
\Tor_1^A(M,\Image d_i)\cong H_1(M\otimes_A\Bbb F')\cong H_{i+1}(M\otimes_A
\Bbb F)=0
\]
for every right $A$-module $M$.
Thus $\Image d_i$ is $A$-flat.

(2) $\Rightarrow$ (1).
For every $i\in\Bbb Z$,
\[
H_{i+1}(M\otimes_A\Bbb F)\cong H_1(M\otimes_A\Bbb F')\cong
\Tor_1^A(M,\Image d_i)=0,
\]
where $\Bbb F'$ is as above.

(1), (2) $\Rightarrow$ (3). Since $\Bbb G\cong\indlim\tau^{\leq n}\Bbb G$,
we may assume that $\Bbb G$ is bounded above.
Since $\Bbb F\cong \indlim \tau^{\leq n}\Bbb F$ and 
$\tau^{\leq n}\Bbb F$ satisfies (2) (and hence (1)),
we may assume that $\Bbb F$ is also bounded above.
Then by an easy spectral sequence argument, $\Bbb G\otimes_A^\bullet\Bbb F$ is
exact.

(3) $\Rightarrow$ (4) is trivial.

(4) $\Rightarrow$ (1). Let $\Bbb P$ be a projective resolution of $M$.
Since $\Bbb P$ is a bounded above complex of flat left $A\op$-modules and
$\Bbb F$ is an exact complex of right $A\op$-modules, 
$\Bbb P\otimes_A^\bullet \Bbb F$ is exact.
Let $\Bbb Q$ be the mapping cone of $\Bbb P\rightarrow M$.
Then $\Bbb Q\otimes_A^\bullet\Bbb F$ is also exact, since $\Bbb Q$ is exact and
$\Bbb F$ is $K$-flat.
By the exact sequence of homology groups
\[
H_i(\Bbb P\otimes_A^\bullet\Bbb F)\rightarrow H_i(M\otimes_A\Bbb F)
\rightarrow H_i(\Bbb Q\otimes_A^\bullet\Bbb F),
\]
we have that $M\otimes_A \Bbb F$ is also exact.
\end{proof}

In \cite[Proposition~5.7]{Spaltenstein}, (4) $\Rightarrow$ (3) above is proved 
essentially.

\begin{Corollary}\label{unbounded.thm}
Let
\[
\Bbb F: \cdots \rightarrow F_{n+1}\xrightarrow{d_{n+1}}F_n\xrightarrow
{d_n}F_{n-1}\rightarrow\cdots
\]
be a \(possibly unbounded\) complex of flat 
$R$-modules.
If $\Bbb F(\frak p)$ is exact for every $\frak p\in\Spec R$, then 
$\Bbb F$ is $K$-flat and exact.
\end{Corollary}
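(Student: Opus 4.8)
The plan is to reduce the unbounded case to the one-sided (non-negatively graded) situation handled by Theorem~\ref{main.thm}, and then feed the conclusion into Lemma~\ref{K-flat.thm}. First I would use the canonical truncations: for each integer $n$, form $\tau_{\geq n}\Bbb F$, the complex $\cdots\rightarrow F_{n+1}\rightarrow\Ker d_n\rightarrow 0$. If I knew that $\Ker d_n$ were $R$-flat, then $\tau_{\geq n}\Bbb F$ would be a complex of flat $R$-modules of the shape required by Theorem~\ref{main.thm}, and checking acyclicity of $\tau_{\geq n}\Bbb F(\frak p)$ would reduce to the exactness of $\Bbb F(\frak p)$ in degrees $>n$, which is part of the hypothesis. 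So the crux is to propagate flatness of the cycle modules (equivalently, the image modules) down the complex, one degree at a time, and this is exactly where the one-variable statement recorded in Corollary~\ref{map.thm} — or the case $F_i=0$ for $i\geq 2$ of Theorem~\ref{main.thm} — does the work.

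More precisely, I would argue by a downward induction, or rather a bootstrap: suppose inductively that $\Image d_{n+1}$ is $R$-flat (for $n$ large and positive this is vacuous once one notes that if $\Bbb F$ is unbounded one cannot start the induction at $+\infty$, so one instead argues for all $n$ simultaneously by first establishing flatness of every $\Image d_i$ using Theorem~\ref{main.thm} applied to suitable truncations). The cleanest route: fix $n$ and consider the truncation $\tau_{\geq n}\Bbb F$. Its homology in degree $0$ is $\Ker d_n/\Image d_{n+1}=H_n(\Bbb F)$ locally, but what I really want is that $\tau_{\geq n}\Bbb F$ is a complex of \emph{flat} modules. Here is the honest order of steps. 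Step 1: note $\Image d_n\hookrightarrow F_{n-1}$, so $\tau^{\leq -n}$ applied appropriately, together with flatness of $F_{n-1}$ and Corollary~\ref{map.thm} (applied to the map $F_n\to F_{n-1}$ after checking injectivity of $d_n(\frak p)$ — which follows since $\Bbb F(\frak p)$ is exact, provided we already know exactness at one spot, so this is circular unless handled carefully). Step 2: to break the circularity, apply Theorem~\ref{main.thm} directly to the non-negatively graded complex $\cdots\to F_{n+1}\to F_n\xrightarrow{d_n} \Image d_n\to 0$ — but $\Image d_n$ need not be flat yet, so instead apply it to $\cdots\to F_{n+1}\to F_n\to F_{n-1}\to F_{n-2}\to\cdots$ truncated brutally at degree $n-1$ by replacing $F_{n-2}$ and below by $0$: that brutal truncation is \emph{not} a subcomplex in a way that preserves exactness mod $\frak p$.

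The clean fix, and the one I expect the author uses: for each $n$, the complex $\Bbb F_{\geq n}:=(\cdots\to F_{n+1}\to F_n\to 0)$ — the \emph{brutal} truncation — together with the fact that $\Bbb F(\frak p)$ is exact shows that $\Bbb F_{\geq n}(\frak p)$ is acyclic in degrees $>0$ \emph{and} has $H_0$ equal to $\Image(d_n(\frak p))$, which is a subspace of the vector space $F_{n-1}(\frak p)$, hence flat over $\kappa(\frak p)$ — trivially, since everything over a field is flat. Wait: acyclicity of $\Bbb F_{\geq n}(\frak p)$ in positive degrees is precisely exactness of $\Bbb F(\frak p)$ at $F_n, F_{n+1},\dots$, which is the hypothesis. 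So Theorem~\ref{main.thm} applies to $\Bbb F_{\geq n}$ and yields: $\Bbb F_{\geq n}$ is acyclic in positive degrees (i.e.\ $H_i(\Bbb F)=0$ for $i>n$) and $H_0(\Bbb F_{\geq n})=\Coker(d_{n+1}\colon F_{n+1}\to F_n)$ is $R$-flat. Letting $n$ range over all integers gives $H_i(\Bbb F)=0$ for all $i$, so $\Bbb F$ is exact; and then $\Image d_n=\Ker d_{n-1}$ fits in $0\to\Image d_{n+1}\to F_n\to\Image d_n\to 0$ with $\Coker(d_{n+1})=F_n/\Image d_{n+1}$... hmm, that cokernel is $\Image d_n$ only after using exactness, which we now have. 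So $\Image d_n\cong\Coker(d_{n+1})=H_0(\Bbb F_{\geq n})$ is $R$-flat for every $n$. Thus condition (2) of Lemma~\ref{K-flat.thm} holds, and the lemma gives that $\Bbb F$ is $K$-flat and exact. The main obstacle is purely organizational — making sure the brutal truncation $\Bbb F_{\geq n}$ genuinely satisfies the hypotheses of Theorem~\ref{main.thm} for every $n$ (it does, since acyclicity is a condition only in positive degrees, which is exactly what "$\Bbb F(\frak p)$ exact" supplies), and then observing that flatness of all the images is precisely what is needed to invoke Lemma~\ref{K-flat.thm}(2)$\Rightarrow$(4).
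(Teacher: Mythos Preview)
Your final argument is correct and is essentially the paper's proof: apply Theorem~\ref{main.thm} to the brutal truncations of $\Bbb F$ and feed the conclusion into Lemma~\ref{K-flat.thm}. The only cosmetic difference is that the paper truncates one degree lower (keeping $F_{n-1}$) and uses the ``$M\otimes\Bbb F$ acyclic'' clause of Theorem~\ref{main.thm} to verify condition~(1) of Lemma~\ref{K-flat.thm} directly, whereas you truncate at $F_n$, extract flatness of $H_0(\Bbb F_{\geq n})\cong\Image d_n$, and verify condition~(2) instead.
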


\begin{proof} By Lemma~\ref{K-flat.thm}, 
it suffices to show that for every $n\in \Bbb Z$ and every $R$-module
$M$, $H_n(M\otimes\Bbb F)=0$.
But this is trivial by Theorem~\ref{main.thm} applied to the complex
\[
\cdots \rightarrow F_{n+1}\xrightarrow{d_{n+1}}F_n\xrightarrow
{d_n}F_{n-1}\rightarrow 0.
\]
\end{proof}

The following was proved by A. Neeman \cite[Corollary~6.10]{Neeman}.

\begin{Theorem}\label{Neeman.thm}
Let $A$ be a ring, and $\Bbb P$ a complex of projective left $A$-modules.
If $\Bbb P$ is $K$-flat and exact, then $\Bbb P$ is null-homotopic.
\end{Theorem}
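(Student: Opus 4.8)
The statement to prove is Neeman's theorem: a $K$-flat and exact complex $\Bbb P$ of projective left $A$-modules is null-homotopic. The natural strategy is the standard one for showing a complex of projectives is null-homotopic, namely to construct a contracting homotopy degree by degree, but the obstruction to doing this naively is that exactness alone guarantees the cycles $Z_i = \Ker d_i$ surject onto but need not split off, so one needs the projectivity of the $P_i$ together with the vanishing of the relevant $\Ext^1$'s. Concretely, the first thing I would do is observe that since $\Bbb P$ is exact, each $P_i$ is an extension $0 \to Z_i \to P_i \to Z_{i-1} \to 0$, and a contracting homotopy exists if and only if every such sequence splits compatibly; this reduces to showing $\Ext^1_A(Z_{i-1}, Z_i) = 0$, or more robustly, to showing each $Z_i$ is a direct summand of $P_i$, equivalently that $\Hom_A(Z_{i-1}, -)$ takes $P_i \to Z_{i-1}$ to a surjection.

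Since that reduction does not by itself use $K$-flatness, the key step is to extract a splitting out of the $K$-flat hypothesis. I would apply the defining property of $K$-flat with a cleverly chosen exact complex $\Bbb G$ of right $A$-modules. The canonical choice is to take $\Bbb G$ built from a projective (or free) resolution so that the tensor product computes $\Tor$; but a sharper idea — the one I expect Neeman actually uses — is to tensor $\Bbb P$ with an exact complex whose homology, after passing to cycles, detects exactly the extension class of $0 \to Z_i \to P_i \to Z_{i-1}\to 0$. Because $\Bbb P$ consists of projectives, $\Hom_A(\Bbb P, -)$ is exact on short exact sequences, and dualizing the $K$-flat condition against a suitable $\Bbb G$ should force $\Hom_A(\Bbb P, -)$ applied to the tautological sequences to remain exact — which is precisely the splitting condition. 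So the skeleton is: (i) exactness $\Rightarrow$ $\Bbb P$ is patched from short exact sequences with projective middle term; (ii) $K$-flatness $\Rightarrow$ a $\Tor$- or $\Ext$-vanishing forcing each such sequence to split; (iii) a degreewise assembly of the splittings into a genuine chain homotopy $s$ with $ds + sd = \id$, checking compatibility at each stage (the compatibility is the usual inductive bookkeeping once the local splittings are in hand).

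The main obstacle I anticipate is step (ii): turning the global $K$-flatness statement — which is about tensoring with \emph{all} exact complexes of right modules — into the \emph{local} vanishing of a single $\Ext^1$ (or the splitting of a single short exact sequence). The difficulty is that $K$-flatness is naturally a statement in the derived category about $-\otim_A^\bullet \Bbb P$, and I have to manufacture the right test complex $\Bbb G$ whose tensor product with $\Bbb P$ isolates the obstruction class; if $A$ is noncommutative and $\Bbb P$ is unbounded this requires care, since one cannot simply invoke boundedness tricks as in Lemma~\ref{K-flat.thm}. I would expect to resolve this by choosing $\Bbb G$ to be a (possibly unbounded) complex built from the modules $Z_i$ and their presentations, arranged so that $H_*(\Bbb G \otimes_A^\bullet \Bbb P) = 0$ unwinds to $\Ext^1_A(Z_{i-1}, Z_i) = 0$ for all $i$; alternatively one appeals to the characterization that $K$-flat complexes of projectives are exactly the $K$-projective ones that are moreover acyclic, and acyclic $K$-projective complexes are null-homotopic essentially by definition of $K$-projectivity. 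Since the statement is quoted from \cite[Corollary~6.10]{Neeman}, I would in the write-up simply cite that proof; but the plan above is how I would reconstruct it from scratch.
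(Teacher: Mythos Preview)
The paper does not prove this theorem at all: it simply quotes it as \cite[Corollary~6.10]{Neeman} and moves on. You already note this at the end of your proposal, and that citation \emph{is} the paper's entire proof. So there is nothing to compare your argument against in the paper itself.

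That said, your reconstruction sketch has a genuine gap at step~(ii), and it is worth naming. The reduction in step~(i) is fine: null-homotopy of an exact complex of projectives is equivalent to each cycle module $Z_i$ being projective (equivalently, each short exact sequence $0\to Z_i\to P_i\to Z_{i-1}\to 0$ splits). But your proposed mechanism for extracting this from $K$-flatness does not work as stated. $K$-flatness is a condition on $\Bbb G\otimes_A^\bullet\Bbb P$ for \emph{right} $A$-module complexes $\Bbb G$, whereas the obstruction $\Ext^1_A(Z_{i-1},Z_i)$ lives entirely on the left-module side; there is no evident test complex $\Bbb G$ of right modules whose tensor product with $\Bbb P$ computes that Ext group. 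What $K$-flatness together with exactness actually buys you, via Lemma~\ref{K-flat.thm}~(4)$\Rightarrow$(2), is only that each $Z_i=\Image d_{i+1}$ is \emph{flat}. The entire content of Neeman's theorem is the passage from ``each $Z_i$ is flat'' to ``each $Z_i$ is projective,'' and that step is not accessible by the kind of direct test-object argument you outline.

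Your fallback suggestion --- that $K$-flat complexes of projectives coincide with the $K$-projective ones (among acyclic complexes), and that acyclic $K$-projective complexes are contractible --- is correct in its second clause but circular in its first: proving that a $K$-flat exact complex of projectives is $K$-projective is equivalent to the theorem itself. Neeman's actual argument is not elementary; it passes through the structure theory of the homotopy category $K(\mathrm{Proj}\,A)$ and uses compact generation and Brown representability techniques developed earlier in \cite{Neeman}. For the purposes of this paper, the citation is the right move.
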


By Corollary~\ref{unbounded.thm} and Theorem~\ref{Neeman.thm}, we have

\begin{Corollary}
Let $\Bbb P$ be a complex of $R$-projective modules.
If $\Bbb P(\frak p)$ is exact for every $\frak p\in\Spec R$, then
$\Bbb P$ is null-homotopic.
\end{Corollary}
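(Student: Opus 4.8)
The plan is to deduce this final corollary directly from the two results immediately preceding it, namely Corollary~\ref{unbounded.thm} and Neeman's Theorem~\ref{Neeman.thm}. The whole point is that the work has already been done: there is essentially no new content here beyond stringing together the right hypotheses.

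First I would observe that $\Bbb P$ is in particular a complex of flat $R$-modules, since projective modules are flat. Therefore the hypothesis that $\Bbb P(\frak p)$ is exact for every $\frak p\in\Spec R$ puts us exactly in the situation of Corollary~\ref{unbounded.thm}. Applying that corollary, I conclude that $\Bbb P$ is $K$-flat and exact. At this point $\Bbb P$ is a complex of projective left $R$-modules (recall $R$ is commutative, so left and right modules coincide) which is $K$-flat and exact, so Theorem~\ref{Neeman.thm} applies verbatim with $A=R$, yielding that $\Bbb P$ is null-homotopic.

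The main obstacle, such as it is, is purely bookkeeping: one must be sure that the notion of $K$-flat used in Corollary~\ref{unbounded.thm} is the same as the one Neeman's theorem requires, and that no finiteness or boundedness assumption sneaks in. Since both statements are phrased for arbitrary (possibly unbounded) complexes and the definition of $K$-flat given after the proof of Theorem~\ref{main.thm} is Spaltenstein's, there is nothing to reconcile. So the ``proof'' is really just the two-line chain of implications above, and I would write it as such.

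\begin{proof}
Since every projective module is flat, $\Bbb P$ is a complex of flat $R$-modules, and $\Bbb P(\frak p)$ is exact for every $\frak p\in\Spec R$ by hypothesis. Hence $\Bbb P$ is $K$-flat and exact by Corollary~\ref{unbounded.thm}. As $R$ is commutative, $\Bbb P$ is a complex of projective left $R$-modules that is $K$-flat and exact, so it is null-homotopic by Theorem~\ref{Neeman.thm}.
\end{proof}
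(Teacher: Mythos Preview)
Your proof is correct and matches the paper's approach exactly: the paper simply states that the corollary follows from Corollary~\ref{unbounded.thm} and Theorem~\ref{Neeman.thm}, which is precisely the chain of implications you have written out.
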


The following also follows.

\def\citinfo{\cite[Lemma~4.2]{Enochs}, \cite[Lemma~I.2.1.4]{Hashimoto}}
\begin{Corollary}[\citinfo]\label{map.thm}
Let $\varphi: F_1\rightarrow F_0$ be an $R$-linear map between $R$-flat
modules.
Then the following are equivalent.
\begin{description}
\item[1] $\varphi$ is injective and $\Coker\varphi$ is $R$-flat.
\item[2] $\varphi$ is pure.
\item[3] $\varphi(\frak p)$ is injective for every $\frak p\in\Spec R$.
\end{description}
\end{Corollary}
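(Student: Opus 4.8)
The plan is to establish the cycle of implications $(1)\Rightarrow(2)\Rightarrow(3)\Rightarrow(1)$, so that the entire substance of the corollary is carried by Theorem~\ref{main.thm}.

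For $(1)\Rightarrow(2)$: write $C=\Coker\varphi$, so that $0\to F_1\xrightarrow{\varphi}F_0\to C\to0$ is a short exact sequence. For any $R$-module $N$, the long exact $\Tor$ sequence contains the exact piece $\Tor_1^R(N,C)\to N\otimes F_1\xrightarrow{\varphi\otimes N}N\otimes F_0$; since $C$ is $R$-flat, $\Tor_1^R(N,C)=0$, whence $\varphi\otimes\mathrm{id}_N$ is injective. As $N$ is arbitrary, the monomorphism $\varphi$ is pure. The implication $(2)\Rightarrow(3)$ is then immediate: purity of $\varphi$ means precisely that $\varphi\otimes\mathrm{id}_N$ stays injective for every $R$-module $N$, and taking $N=\kappa(\frak p)$ gives injectivity of $\varphi(\frak p)$ for every $\frak p\in\Spec R$.

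For $(3)\Rightarrow(1)$: apply Theorem~\ref{main.thm} to the two-term complex
\[
\Bbb F:\ \cdots\to0\to F_1\xrightarrow{\varphi}F_0\to0
\]
with $F_0$ in homological degree $0$ and $F_1$ in degree $1$. For each $\frak p\in\Spec R$, the complex $\Bbb F(\frak p)$ is $\kappa(\frak p)\otimes F_1\xrightarrow{\varphi(\frak p)}\kappa(\frak p)\otimes F_0$, whose only possibly nonzero homology in positive degree is $H_1(\Bbb F(\frak p))=\Ker\varphi(\frak p)$, which vanishes by $(3)$. Thus $\Bbb F(\frak p)$ is acyclic for every $\frak p$, and Theorem~\ref{main.thm} shows that $\Bbb F$ is acyclic---i.e.\ $\varphi$ is injective---and that $H_0(\Bbb F)=\Coker\varphi$ is $R$-flat. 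This is exactly $(1)$.

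No real difficulty remains once Theorem~\ref{main.thm} is in hand; the only points to watch are pinning down the definition of purity used in $(1)\Rightarrow(2)$ and correctly reading off the homology of the two-term complex $\Bbb F(\frak p)$ in $(3)\Rightarrow(1)$. One could alternatively prove $(1)\Rightarrow(3)$ directly via $\Tor_1^R(\kappa(\frak p),\Coker\varphi)=0$ and keep $(2)$ to the side, but the cyclic arrangement above is cleanest.
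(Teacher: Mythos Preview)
Your proof is correct and follows exactly the paper's approach: the paper records the implications $1\Rightarrow 2\Rightarrow 3$ as obvious and obtains $3\Rightarrow 1$ as a special case of Theorem~\ref{main.thm}, which is precisely what you do with the details spelled out.
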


\begin{proof} {\bf 1$\Rightarrow$2$\Rightarrow$3} is obvious.
{\bf 3$\Rightarrow$1} is a special case of Theorem~\ref{main.thm}.
\qed

\def\citinfo{\cite[Corollary~I.2.1.6]{Hashimoto}}
\begin{Corollary}[\citinfo]\label{zero.thm}
Let $F$ be a flat $R$-module.
If $F(\frak p)=0$ for every $\frak p\in\Spec R$, then $F=0$.
\end{Corollary}

\proof Consider the zero map $F\rightarrow 0$, and apply
Corollary~\ref{map.thm}.
We have that this map is injective, and hence $F=0$.
\end{proof}

If $M$ is a finitely generated $R$-module and $M(\frak m)=0$ for every
maximal ideal $\frak m$ of $R$, then $M=0$.
This is a consequence of Nakayama's lemma.

\begin{Corollary}\label{isom.thm}
Let $\varphi:F_1\rightarrow F_0$ be an $R$-linear map between $R$-flat
modules.
If $\varphi(\frak p)$ is an isomorphism for every $\frak p\in\Spec R$, then
$\varphi$ is an isomorphism.
\end{Corollary}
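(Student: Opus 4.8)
The plan is to deduce Corollary~\ref{isom.thm} from the results already in hand, chiefly Corollary~\ref{map.thm} and Corollary~\ref{zero.thm}. First I would observe that if $\varphi(\frak p)$ is an isomorphism for every $\frak p\in\Spec R$, then in particular $\varphi(\frak p)$ is injective for every $\frak p$, so by the implication \textbf{3$\Rightarrow$1} of Corollary~\ref{map.thm} we already know that $\varphi$ is injective and $C:=\Coker\varphi$ is $R$-flat. It remains to show that $C=0$, for then $\varphi$ is also surjective, hence an isomorphism.

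To see that $C=0$, the key step is to track what happens to the cokernel after applying $-(\frak p)$. Since $\varphi$ is injective with flat cokernel, the short exact sequence $0\to F_1\xrightarrow{\varphi}F_0\to C\to 0$ stays exact after tensoring with $\kappa(\frak p)$ (indeed $\Tor_1^R(\kappa(\frak p),C)=0$ because $C$ is flat), so $C(\frak p)\cong\Coker(\varphi(\frak p))$. But $\varphi(\frak p)$ is assumed to be an isomorphism, so $\Coker(\varphi(\frak p))=0$, i.e.\ $C(\frak p)=0$ for every $\frak p\in\Spec R$. Now $C$ is a flat $R$-module with $C(\frak p)=0$ for all $\frak p$, so Corollary~\ref{zero.thm} gives $C=0$. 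Hence $\varphi$ is surjective, and being also injective it is an isomorphism.

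There is really only one place where care is needed, and that is the right-exactness/flatness bookkeeping in the previous paragraph: one must be sure that $C(\frak p)$ is genuinely the cokernel of $\varphi(\frak p)$ rather than something a priori larger. Since $-\otimes_R\kappa(\frak p)$ is right exact this is automatic for the cokernel even without flatness of $C$; flatness of $C$ is only what guarantees the whole short exact sequence remains exact, but for the present purpose right-exactness of the tensor product alone already yields $C(\frak p)=\Coker(\varphi(\frak p))=0$. So the main (and essentially only) obstacle is simply assembling the two corollaries in the right order; no new idea is required beyond noticing that ``isomorphism fibrewise'' splits into ``injective fibrewise'' (handled by Corollary~\ref{map.thm}) plus ``cokernel vanishes fibrewise'' (handled by Corollary~\ref{zero.thm}).
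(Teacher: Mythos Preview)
Your proof is correct and follows essentially the same route as the paper: apply Corollary~\ref{map.thm} to get injectivity of $\varphi$ and flatness of $C=\Coker\varphi$, identify $C(\frak p)\cong\Coker(\varphi(\frak p))=0$, and conclude $C=0$ via Corollary~\ref{zero.thm}. Your extra remark that right-exactness of $-\otimes\kappa(\frak p)$ alone suffices for $C(\frak p)\cong\Coker(\varphi(\frak p))$ is a nice clarification.
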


\begin{proof} By Corollary~\ref{map.thm}, $\varphi$ is injective and $C:=
\Coker\varphi$ is $R$-flat.
Since $C(\frak p)\cong \Coker(\varphi(\frak p))=0$ for every 
$\frak p\in\Spec R$,
we have that $C=0$ by Corollary~\ref{zero.thm}.
\end{proof}

\begin{Corollary}\label{tor.thm}
Let $M$ be an $R$-module.
If $\Tor_i^R(\kappa(\frak p),M)=0$ for every $i>0$ and every prime
ideal $\fp\in\Spec R$, then $M$ is 
$R$-flat.
If $\Tor_i^R(\kappa(\frak p),M)=0$ for every $i\geq 0$ and every
prime ideal $\fp\in\Spec R$, then $M=0$.
\end{Corollary}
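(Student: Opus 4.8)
The plan is to reduce both assertions to Theorem~\ref{main.thm} by resolving $M$. First I would choose a free resolution
\[
\Bbb F:\cdots\rightarrow F_1\rightarrow F_0\rightarrow 0
\]
of $M$; this is in particular a complex of flat $R$-modules with $H_0(\Bbb F)\cong M$ and $H_i(\Bbb F)=0$ for $i>0$. Since each $F_i$ is flat, the homology of $\Bbb F(\frak p)=\kappa(\frak p)\otimes\Bbb F$ computes Tor, namely $H_i(\Bbb F(\frak p))\cong\Tor_i^R(\kappa(\frak p),M)$ for every $i$ and every $\frak p\in\Spec R$. Hence the hypothesis $\Tor_i^R(\kappa(\frak p),M)=0$ for all $i>0$ is exactly the statement that $\Bbb F(\frak p)$ is acyclic for every $\frak p\in\Spec R$. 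Theorem~\ref{main.thm} then yields that $H_0(\Bbb F)=M$ is $R$-flat, which is the first assertion.

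For the second assertion I would feed in the first: $M$ is now known to be flat, and the additional hypothesis at $i=0$ reads $M(\frak p)=\kappa(\frak p)\otimes M=\Tor_0^R(\kappa(\frak p),M)=0$ for every $\frak p\in\Spec R$. Applying Corollary~\ref{zero.thm} to the flat module $M$ then forces $M=0$.

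I do not expect a genuine obstacle here beyond Theorem~\ref{main.thm} itself: the corollary is a straightforward translation of that theorem through a flat (free) resolution. The only points needing a little care are the identification $H_i(\kappa(\frak p)\otimes\Bbb F)\cong\Tor_i^R(\kappa(\frak p),M)$, which relies on the flatness of the $F_i$ (equivalently, the computation of Tor from a flat resolution), and the fact that ``acyclic'' in the sense used in this paper ($H_i=0$ for $i>0$) corresponds precisely to the vanishing of the positive-degree Tor groups, with the degree-zero Tor group handled separately via Corollary~\ref{zero.thm}.
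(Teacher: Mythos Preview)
Your argument is correct and matches the paper's own proof essentially verbatim: the paper simply says to take a projective resolution of $M$ and apply Theorem~\ref{main.thm} for the first assertion, and to combine the first assertion with Corollary~\ref{zero.thm} for the second. Your version just spells out the identification $H_i(\Bbb F(\frak p))\cong\Tor_i^R(\kappa(\frak p),M)$ explicitly.
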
 

\begin{proof} For the first assertion, 
Let $\Bbb F$ be a projective resolution of $M$, and
apply Theorem~\ref{main.thm}.
The second assertion follows from the first assertion and
Corollary~\ref{zero.thm}.
\end{proof}

\begin{Corollary}\label{ext.thm}
Let $M$ be an $R$-module.
If $\Ext^i_R(M,\kappa(\frak p))=0$ for every $i>0$ \(resp.\ $i\geq 0$\) 
and every prime ideal $\fp\in\Spec R$, 
then $M$ is $R$-flat \(resp.\ $M=0$\).
\end{Corollary}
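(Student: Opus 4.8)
The plan is to reduce the $\Ext$-vanishing hypothesis to the $\Tor$-vanishing hypothesis of Corollary~\ref{tor.thm}, by exhibiting for each prime $\fp$ a natural identification of $\Ext^i_R(M,\kappa(\fp))$ with the $\kappa(\fp)$-linear dual of $\Tor_i^R(\kappa(\fp),M)$.

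Fix $\fp\in\Spec R$ and write $k=\kappa(\fp)$. Choose a projective resolution $\Bbb P\to M$ over $R$. First I would use the extension-of-scalars adjunction along the ring map $R\to k$: for any $R$-module $P$ and any $k$-module $N$ there is a natural isomorphism $\Hom_k(k\otimes P,N)\cong\Hom_R(P,N)$, where on the right $N$ is viewed as an $R$-module by restriction. Taking $N=k$ gives a natural isomorphism $\Hom_R(P,k)\cong\Hom_k(k\otimes P,k)$, and applying this degreewise to $\Bbb P$ produces an isomorphism of cochain complexes $\Hom_R(\Bbb P,k)\cong\Hom_k(k\otimes\Bbb P,k)$.

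Next, since $k$ is a field, the functor $\Hom_k(-,k)$ is exact and therefore commutes with the formation of (co)homology; hence
\[
\Ext^i_R(M,k)\cong H^i\bigl(\Hom_k(k\otimes\Bbb P,k)\bigr)\cong\Hom_k\bigl(H_i(k\otimes\Bbb P),k\bigr)=\Hom_k\bigl(\Tor_i^R(k,M),k\bigr).
\]
Because a vector space over a field is zero exactly when its dual is zero, this yields the equivalence $\Ext^i_R(M,\kappa(\fp))=0\iff\Tor_i^R(\kappa(\fp),M)=0$, valid for every $i\ge 0$ and every $\fp\in\Spec R$.

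The corollary now follows formally. If $\Ext^i_R(M,\kappa(\fp))=0$ for every $i>0$ and every $\fp$, then $\Tor_i^R(\kappa(\fp),M)=0$ for every $i>0$ and every $\fp$, so $M$ is $R$-flat by Corollary~\ref{tor.thm}; if in addition $\Hom_R(M,\kappa(\fp))=\Ext^0_R(M,\kappa(\fp))=0$ for every $\fp$, then $\Tor_i^R(\kappa(\fp),M)=0$ for every $i\ge 0$ and every $\fp$, whence $M=0$, again by Corollary~\ref{tor.thm}. The only point that needs care is checking that the adjunction isomorphism $\Hom_R(\Bbb P,k)\cong\Hom_k(k\otimes\Bbb P,k)$ is natural in $\Bbb P$, so that it is indeed an isomorphism of complexes; this is routine and is the sole (minor) obstacle, the rest being formal.
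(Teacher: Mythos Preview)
Your proof is correct and follows essentially the same approach as the paper: the paper simply cites the isomorphism $\Ext^i_R(M,\kappa(\fp))\cong\Hom_{\kappa(\fp)}(\Tor_i^R(\kappa(\fp),M),\kappa(\fp))$ as a known fact and immediately invokes Corollary~\ref{tor.thm}, whereas you spell out the standard derivation of that isomorphism via a projective resolution and the Hom--tensor adjunction. The extra detail you provide is fine and the argument is sound.
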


\begin{proof} This is trivial by Corollary~\ref{tor.thm} and the fact
\[
\Ext^i_R(M,\kappa(\frak p))\cong \Hom_{\kappa(\frak p)}(\Tor_i^R(
\kappa(\frak p),M),\kappa(\frak p)).
\]
\end{proof}

\section{Some examples}

\begin{Example}
There is an acyclic projective complex
\[
\Bbb P:\cdots \rightarrow P_1\rightarrow P_0\rightarrow 0
\]
over a noetherian commutative ring $R$ such that $H_0(\Bbb P)$ is
$R$-flat and $h_0(\frak p):=\dim_{\kappa(\frak p)}H_0(\Bbb P(\frak p))$ is
finite and constant, but $H_0(\Bbb P)$ is neither $R$-finite nor 
$R$-projective.
\end{Example}

\begin{proof} Set $R=\Bbb Z$, $M=\sum_{p \text{ prime}}(1/p)\Bbb Z
\subset \Bbb Q$, 
and $\Bbb P$ to be a
projective resolution of $M$.
Then $M$ is $R$-torsion free, and is $R$-flat.
Since $M_{(p)}=(1/p)\Bbb Z_{(p)}$, $h_0(\frak p)=1$ for every $\frak p\in
\Spec \Bbb Z$.
A finitely generated nonzero 
$\Bbb Z$-submodule of $\Bbb Q$ must be
rank-one free, but $M$ is not a cyclic module, and is not rank-one free.
This shows that $M$ is not $R$-finite.
As $R$ is a principal ideal domain, every $R$-projective module is free.
If $M$ is projective, then it is free of rank $h_0((0))=1$.
But $M$ is not finitely generated, so $M$ is not projective.
\end{proof}

\vskip 1ex

\begin{Remark}
Let $(R,\frak m)$ be a noetherian {\em local} ring, $F$ a flat $R$-module,
and $c$ a non-negative integer.
If $\dim_{\kappa(\frak p)}F(\frak p)=c$ for every $\frak p\in\Spec R$, then
$F\cong R^c$, see \cite[Corollary~III.2.1.10]{Hashimoto}.
\end{Remark}

\vskip 1ex

\begin{Remark} Let
\[
\Bbb P: 0\rightarrow P^0\xrightarrow{d^0} 
P^1\xrightarrow{d^1} P^2\xrightarrow{d^2} \cdots
\]
be a complex of $R$-flat modules such that $P^0$ is $R$-projective.
Assume that 
$\Bbb P(\frak p)$ is acyclic (i.e., $H^i(\Bbb P(\frak p))=0$ for every $i>0$) 
and $h^0_{\Bbb P}(\frak p):=\dim_{\kappa(\frak p)}H^0(\Bbb P(\frak p))$ is 
finite for every $\frak p\in\Spec R$.
If $h^0_{\Bbb P}$ is a locally constant function on $\Spec R$, then
$H^0(\Bbb P)$ is 
$R$-finite $R$-projective, $H^i(M\otimes\Bbb P)=0$ ($i>0$), and 
the canonical map $M\otimes H^0(\Bbb P)
\rightarrow H^0(M\otimes \Bbb P)$ is an isomorphism for every $R$-module $M$,
see \cite[Proposition~III.2.1.14]{Hashimoto}.
If, moreover, $\Bbb P$ is a complex of $R$-projective modules, 
then $\Image d^i$ is
$R$-projective for every $i\geq 0$, 
as can be seen easily from Theorem~\ref{Neeman.thm}.
\end{Remark}

\begin{Example} 
Let $M$ be an $R$-module.
Even if $M(\frak p)=0$ for every $\frak p\in\Spec R$, $M$ may not be zero.
Even if $\Tor^R_1(\kappa(\frak p),M)=0$ for every $\frak p\in\Spec R$, 
$M$ may not be $R$-flat.

Indeed, 
let $(R,\frak m,k)$ be a $d$-dimensional regular local 
ring, and $E$ the injective hull of $k$.
Then
\[
\Tor^R_i(\kappa(\frak p),E)\cong 
\left\{
\begin{array}{ll}
k & \text{for } i=d \text{ and } \frak p=\frak m \\
0 & \text{otherwise}
\end{array}
\right..
\]
$E$ is not $R$-flat unless $d=0$.
\end{Example}

\begin{proof} 
Since $\supp E=\{\frak m\}$, $\Tor^R_i(\kappa(\frak p),E)=0$ unless
$\frak p=\frak m$.

Let $\bs x=(x_1,\ldots,x_d)$ be a regular system of parameters of $R$,
and $\Bbb K$ the Koszul complex $K(\bs x;R)$, which is a minimal free
resolution of $k$.
Note that $\Bbb K$ is self-dual.
That is, $\Bbb K^*\cong \Bbb K[-d]$,
where $\Bbb K^*=\Hom_R^\bullet(\Bbb K,R)$, and $\Bbb K[-d]^n=\Bbb K^{n-d}$.
So
\begin{multline*}
\Tor_i^R(k,E)\cong H^{-i}(\Bbb K\otimes E)
\cong H^{-i}(\Bbb K^{**}\otimes E)
\cong H^{-i}(\Hom^\bullet_R(\Bbb K[-d],E))\\
\cong H^{-i}(\Hom^\bullet_R(k[-d],E))
\cong
\left\{
\begin{array}{ll}
k & (i=d)\\
0 & (i\neq d)
\end{array}
\right..
\end{multline*}
\end{proof}

\begin{Example} There is a complex $\Bbb P$ of projective modules
over a noetherian
commutative ring $R$ such that for each $\frak m\in\Max(R)$, $\Bbb P(\frak m)$ 
is exact, but $\Bbb P$ is not exact, where $\Max(R)$ denotes the set of 
maximal ideals of $R$.
\end{Example}

\begin{proof} Let $R$ be a DVR with its field of fractions $K$, and $\Bbb P$ a
projective resolution of $K$.
\end{proof}

\end{document}